\patchcmd{\thebibliography}{\leftmargin\labelwidth}{\leftmargin\labelwidth\addtolength\itemsep{-0.1\baselineskip}}{}{}
\author{Boris Bukh\thanks{Department of Mathematical Sciences, Carnegie Mellon University, Pittsburgh, PA 15213, USA. Supported in part by Sloan Research Fellowship and by U.S.\ taxpayers through NSF CAREER grant DMS-1555149.} \and Anish Sevekari\thanks{Supported in part by U.S.\ taxpayers through NSF CAREER grant DMS-1555149.}}
\title{Linear orderings of combinatorial cubes}
\date{}
\newtheorem{theorem}{Theorem}
\newaliascnt{lemma}{theorem}
\newtheorem{lemma}[lemma]{Lemma}
\newaliascnt{corollary}{theorem}
\newaliascnt{proposition}{theorem}
\newtheorem{proposition}[proposition]{Proposition}
\newcommand*{\eqdef}{\stackrel{\text{\tiny{def}}}{=}}            
\newcommand*{\abs}[1]{\lvert #1\rvert}                           
\newcommand*{\veps}{\varepsilon}                                 
\newcommand*{\N}{\mathbb{N}}                                     
\def\lbrac{\llbracket}                                           
\def\rbrac{\rrbracket}
\DeclareMathOperator{\lex}{lex}                                  
\def\wf{\mathbf}                                              
\def\before{\mathrel{\triangleleft}}                             
\def\prece{\preccurlyeq}                                         
\newcommand{\prefw}{\reflectbox{\ensuremath{\skew{-3.5}\vec{\reflectbox{\ensuremath{w}}}}}}
\newcommand{\suffw}{\skew{-1.5}\vec{w}}
\def\D#1-{$\if\relax\detokenize{#1}\relax d\else (d#1)\fi$\nobreakdash-\hspace{0pt}}
\begin{document}

\maketitle

\begin{abstract}
We show that, for every linear ordering of $[2]^n$, there is a large subcube
on which the ordering is lexicographic. We use this to deduce that
every long sequence contains a long monotone subsequence supported
on an affine cube.

More generally, we prove an analogous result for linear orderings of $[k]^n$.
We show that, for every such ordering, there is a large subcube on which the
ordering agrees with one of approximately $\frac{(k-1)!}{2(\ln 2)^k}$~orderings.
\end{abstract}

\section{Statement of results}
\paragraph{Monotone subsequences.}
A classic result of Erd\H{o}s and Szekeres \cite{erdos_szekeres} asserts that
every sufficiently long sequence $(a_1,a_2,\dotsc)$ contains a subsequence
$(a_{i_1},\dotsc,a_{i_m})$ of length $m$ that is monotone. One may wonder
if one may strengthen this result by requiring that the set
of indices $\{i_1,\dotsc,i_m\}$ is an arithmetically structured set.

Our first result is such a strengthening. Before stating it,
we recall that \emph{an affine \D-cube} is a set of the form
$\{x_0+\veps_1 x_1+\dotsb+\veps_d x_d : \veps_1,\dotsc,\veps_d\in \{0,1\}\}$.
An affine \D-cube is \emph{proper} if it contains $2^d$ distinct elements.

\begin{theorem}\label{thm:es}
For every $d$, there exists $m$ such that every sequence of $m$ distinct real numbers
contains a monotone subsequence whose index set is a proper affine \D-cube. 
\end{theorem}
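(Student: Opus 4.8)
The plan is to encode the sequence by a combinatorial cube and read a monotone affine cube off of a lexicographic subcube supplied by the subcube theorem from the abstract. Fix $n$ large (to be chosen at the end) and restrict attention to the first $2^n$ terms, so I may assume the sequence is $a_0,\dots,a_{2^n-1}$ with distinct values. Identify the index $v\in\{0,1\}^n$ with the position $p(v)=\sum_{i=1}^n v_i 2^{n-i}$, so that the natural order of positions is exactly the lexicographic order on $\{0,1\}^n$ with coordinate $1$ most significant. Let $\prec$ be the linear order defined by $u\prec v\iff a_{p(u)}<a_{p(v)}$. Applying the subcube theorem to $\prec$ yields a combinatorial subcube $Q$ of some large dimension $d'$ on which $\prec$ is lexicographic; write its free coordinates as $i_1<\dots<i_{d'}$ and relabel them $1,\dots,d'$.

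The key observation is that the positions of the points of $Q$ already form a proper affine $d'$-cube: the frozen coordinates contribute a constant base, and the free coordinate $t$ contributes the generator $2^{n-i_t}$, all distinct powers of $2$, so all $2^{d'}$ sums are distinct. Thus the \emph{position order} on $Q$ is the lexicographic order $P$ with significance $1,\dots,d'$ and all orientations positive, while the \emph{value order} is a lexicographic order $L$ with some significance permutation $\sigma$ and some per-coordinate orientations. Freezing some free coordinates of $Q$ and keeping the rest gives again a combinatorial subcube, hence a proper affine cube; and a monotone subsequence is exactly a sub-affine-cube on which $P$ and $L$ coincide or are exactly reversed. So everything reduces to finding a large sub-affine-cube reconciling $P$ and $L$. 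First I would dispose of orientations: restricting to the at least $d'/2$ free coordinates lying in the majority orientation class of $L$, and replacing $a$ by $-a$ if necessary (which reverses $L$, flipping all its orientations while preserving $\sigma$), I reduce to the case where $P$ and $L$ both have all orientations positive, so the only discrepancy is the significance permutation $\sigma$. Then I would apply the Erd\H{o}s--Szekeres theorem \cite{erdos_szekeres} to $\sigma$ to obtain a monotone run of coordinates $t_1<\dots<t_\ell$ with $\ell\geq\sqrt{d'/2}$. If $\sigma$ is \emph{increasing} on them, the combinatorial subcube with free coordinates $t_1,\dots,t_\ell$ has $P$ and $L$ with identical significance and identical orientations, hence equal, and its affine $\ell$-cube carries an increasing subsequence.

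The hard part is the \emph{decreasing} case, where $L$ reverses the significance of $t_1,\dots,t_\ell$; here no combinatorial subcube works, since $P$ and $L$ list the coordinates in opposite orders, and one checks on the bit-reversal permutation that such a cube is genuinely non-monotone. The remedy is to use the additive structure of affine cubes by \emph{merging} generators: pair coordinate $t_k$ with $t_{\ell+1-k}$ and let the two move together, so each pair becomes a single generator of a new affine cube of dimension $\lfloor \ell/2\rfloor$. The point is that the $k$-th pair contains both the $k$-th most position-significant coordinate $t_k$ and the $k$-th most value-significant coordinate $t_{\ell+1-k}$, so its position generator $2^{n-i_{t_k}}+2^{n-i_{t_{\ell+1-k}}}$ and its value generator are each dominated by their respective $k$-th largest weight; a short computation with super-increasing powers of $2$ then shows that both $P$ and $L$ restrict to the \emph{same} lexicographic order on the pairs, pair $1$ most significant, so they coincide on the merged cube and the subsequence is monotone. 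Verifying this weight domination, and checking that the merged point set is indeed a proper affine cube (the $2^{\lfloor \ell/2\rfloor}$ sums being distinct, as they are honest positions in $Q$), is the technical heart of the argument.

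In either case the affine cube produced has dimension at least $\tfrac12\sqrt{d'/2}$, which tends to infinity with $d'$. Since the subcube theorem guarantees $d'\to\infty$ as $n\to\infty$, I would finally choose $n$ (hence $m=2^n$) large enough that $\tfrac12\sqrt{d'/2}\geq d$, which yields a monotone subsequence whose index set is a proper affine $d$-cube and completes the proof.
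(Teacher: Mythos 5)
Your opening paragraph is, in essence, the paper's entire proof: encode the first $2^n$ indices as $[2]^n$ so that every combinatorial subcube projects to a proper affine cube, apply \autoref{thm:two}, and observe that a lexicographically ordered subcube yields a monotone subsequence. Everything after that solves a problem that does not actually arise. You allow the value order on the subcube to be a lexicographic order with an arbitrary significance permutation $\sigma$ and arbitrary per-coordinate orientations, but \autoref{thm:two} says more: the restriction of $\before$ to the subcube, read through the \emph{canonical bijection}, is the lexicographic ordering of $[2]^{d'}$ for one of the two orderings of $[2]$. The canonical bijection ranks the wildcards by first occurrence, which is precisely the significance order of the position encoding; so $\sigma$ is the identity, the orientation is uniform, and the value order is either the position order or its exact reverse --- the subsequence is monotone outright, with no loss of dimension. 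Your extra machinery (majority orientation, Erd\H{o}s--Szekeres on $\sigma$, and the generator-merging trick for the decreasing case) therefore costs you a drop from $d'$ to about $\tfrac12\sqrt{d'/2}$ for nothing; on the other hand, the merging idea is sound and genuinely uses affine (non-combinatorial) cubes, so I would call the proof over-engineered rather than wrong. Two caveats do need attention if you keep your route. First, a wildcard $\wf{*_t}$ in a combinatorial subcube may occupy \emph{several} positions, so ``the free coordinate $t$ contributes the generator $2^{n-i_t}$'' should be a sum of powers over the support of $\wf{*_t}$; the properness of the affine cube and the domination estimates still go through (the supports are disjoint, and one compares against the first occurrence), but your formulas, including the paired generators in the merging step, are written as if each wildcard were a single position. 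Second, you explicitly defer the weight-domination computation, which is the only nontrivial step of your detour and would have to be written out. Finally, note the paper projects in base $3$ rather than base $2$, which makes the identification ``position order $=$ canonical lexicographic order on every subcube'' completely painless.
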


In this result one cannot replace affine cubes by arithmetic progressions.
More precisely, we observe the following.
\begin{proposition}\label{prop:counterexample}
There exist arbitrarily long sequences of distinct real numbers 
that contains no monotone subsequences whose index set is a $3$-term
arithmetic progression.
\end{proposition}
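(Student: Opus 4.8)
The plan is to construct, for every $n$, an explicit sequence of $2^n$ distinct reals containing no monotone $3$-term arithmetic progression (AP); restricting to an initial segment then produces every length. Writing the sequence as $(a_0,\dotsc,a_{2^n-1})$, the requirement is that for all $i$ and all $r\ge 1$ with $i+2r\le 2^n-1$, the triple $(a_i,a_{i+r},a_{i+2r})$ is \emph{not} monotone, i.e.\ the middle term $a_{i+r}$ is strictly larger than both neighbours or strictly smaller than both. My construction is the van der Corput (bit-reversal) sequence, which I would build recursively to sidestep any reasoning about carries: set $S_0=(0)$ and, given $S_{n-1}=(a^{(n-1)}_0,\dotsc,a^{(n-1)}_{2^{n-1}-1})$ with all values in $[0,1)$, define $S_n$ by interleaving two affinely rescaled copies,
\[
a^{(n)}_{2j}=\tfrac12\,a^{(n-1)}_{j}, \qquad a^{(n)}_{2j+1}=\tfrac12+\tfrac12\,a^{(n-1)}_{j}, \qquad 0\le j\le 2^{n-1}-1 .
\]
Thus all even-indexed terms lie in the band $[0,\tfrac12)$ and all odd-indexed terms in $[\tfrac12,1)$, and deleting the last bit of the index recovers $S_{n-1}$.

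The verification would split according to the parity of the common difference $r$. If $r$ is odd, then $i$ and $i+2r$ share the parity of $i$ while $i+r$ has the opposite parity; hence $a_{i+r}$ lies in the opposite half-band from both $a_i$ and $a_{i+2r}$. Since the bands $[0,\tfrac12)$ and $[\tfrac12,1)$ are disjoint, $a_{i+r}$ is either greater than both ends or less than both, so the triple is non-monotone regardless of the finer structure. If $r$ is even, write $r=2s$; now $i,i+r,i+2r$ all have the same parity $b\in\{0,1\}$, and their reduced indices $j,j+s,j+2s$ (obtained by deleting the common last bit) again form a $3$-AP, this time in $\{0,\dotsc,2^{n-1}-1\}$. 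Because $a^{(n)}_{2j'+b}=\tfrac{b}{2}+\tfrac12 a^{(n-1)}_{j'}$ is an increasing affine function of $a^{(n-1)}_{j'}$, monotonicity of the original triple is equivalent to monotonicity of $(a^{(n-1)}_{j},a^{(n-1)}_{j+s},a^{(n-1)}_{j+2s})$, which is excluded by the induction hypothesis. Distinctness is immediate by the same induction: the two bands are disjoint, and within each band the values are an injective affine image of the distinct values of $S_{n-1}$.

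Finally, to obtain sequences of \emph{arbitrary} length $M$, I would simply take the initial segment $(a_0,\dotsc,a_{M-1})$ of $S_n$ for any $n$ with $2^n\ge M$: every $3$-AP inside $\{0,\dotsc,M-1\}$ is also a $3$-AP inside $\{0,\dotsc,2^n-1\}$, hence still non-monotone. I do not expect a genuine obstacle here; the one point that must be got right is the interaction of the two regimes, and the pleasant feature of this construction is that it requires \emph{no} compatibility or alternation condition between levels: the unconditional band separation alone defeats every odd-$r$ progression, while every even-$r$ progression reduces cleanly to a strictly smaller instance under $i\mapsto\lfloor i/2\rfloor$.
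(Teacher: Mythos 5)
Your proof is correct and is essentially the same as the paper's: both interleave the previous sequence with a displaced copy (the paper shifts by a large constant $M$, you rescale into the bands $[0,\tfrac12)$ and $[\tfrac12,1)$), and both split a $3$-AP by the parity of its common difference, using band separation when the middle index has opposite parity from the endpoints and the induction hypothesis otherwise. Identifying the result as the van der Corput sequence is a nice touch but does not change the argument.
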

\begin{proof}
We construct such sequences, which we dub \emph{$3$-AP-free}, 
inductively. We start with any sequence of length $1$. For the induction step,
we note that if $(a_1,\dotsc,a_m)$
is any $3$-AP-free sequence, and $M>2\max_i \abs{a_i}$, then 
$(a_1,a_1+M,a_2,a_2+M,\dotsc,a_m,a_m+M)$ is also $3$-AP-free.
Indeed, suppose $\{i,j,k\}$ is a three-term arithmetic progression.
If the parities of $i,j,k$ are the same, the sequence $(a_i,a_j,a_k)$ is not monotone 
by induction. If the parities of $i,j,k$ are different, then $(a_i,a_j,a_k)$ is not monotone
by the choice of~$M$.
\end{proof}

\paragraph{Hales--Jewett-type result for $[2]^n$.}
A common way to prove Ramsey results on the integers is to deduce
them from abstract statements about high-dimensional cubes. For example,
in this way one deduces van der Waerden's theorem from the Hales--Jewett
theorem, and Szemer\'edi's theorem from the density Hales--Jewett theorem.
Our proof of \autoref{thm:es} also follows this pattern: we deduce
\autoref{thm:es} from a Ramsey result about linear orderings of $[2]^n$.

As our most general result will apply not only to $[2]^n$, but 
to $[k]^n$ for any $k\geq 2$, we introduce definitions at that
level of generality. 

We shall think of elements of $[k]^n$ as words of length~$n$ over alphabet~$[k]$. 
A \emph{\D-parameter word} is a word $p$ over alphabet $[k]\cup\{\wf{*_1},\dotsc,\wf{*_d}\}$
that contains each of $\wf{*_1},\dotsc,\wf{*_d}$ at least once. For any word $w$ of length $d$ 
(possibly over a different alphabet), we let $p\lbrac w\rbrac $ be the word obtained from $p$ by replacing each $\wf{*_i}$
by~$w_i$, for each $i$. For example, if $p=\wf{21*_1*_2*_13}$ and $w=\wf{31}$,
then $p\lbrac w\rbrac=\wf{213133}$. 

If $p$ is a \D-parameter word, then the set $\{p\lbrac w\rbrac : w\in [k]^d\}$
can be naturally regarded as a copy of $[k]^d$ inside $[k]^n$; we
thus call it a (combinatorial) \emph{\D-subcube}. Two \D-parameter words that
differ in a permutation of $\{\wf{*_1},\dotsc,\wf{*_d}\}$ induce the same \D-subcube.
Call a \D-parameter word $p$ \emph{canonical} if the first occurrence
of $\wf{*_1}$ precedes the first occurrence of $\wf{*_2}$, which in turn precedes
the first occurrence of $\wf{*_3}$, etc. 
Canonical words induce a \emph{canonical bijection} between $[k]^d$ and corresponding
\D-subcubes; we shall always use this bijection when identifying \D-subcubes
with~$[k]^d$.

If $p$ is a $D$\nobreakdash-parameter word of length $n$ and $p'$ is a \D-parameter word of length
$D$, then $p\lbrac p'\rbrac$ is a \D-parameter word of length $n$. Furthermore,
if both $p$ and $p'$ are canonical, then so is $p\lbrac p'\rbrac$. Hence,
if $C_1\supseteq C_2\supset \dotsb$  is a nested chain of subcubes of $[k]^n$,
we may use the canonical bijection to regard $C_{i+1}$ as a subcube of $C_i$.

The canonical bijection also allows us to regard a restriction of a linear ordering on $[k]^n$
to any \D-subcube as a linear ordering on~$[k]^d$.
Namely, let $\before$ be a linear ordering on $[k]^n$ and let $p$ be a canonical \D-parameter
word of length $n$. If $w,w'\in [k]^d$, we then set $w\before w'$ whenever $p\lbrac w\rbrac \before p\lbrac w'\rbrac$.

Given a linear ordering $<$ on $[k]$, the \emph{lexicographic ordering} on $[k]^n$ is defined
by setting $w<_{\lex} w'$ whenever $w_i<w_i'$, where $i$ is the least index such that $w_i\neq w_i'$.
Note that if $p$ is a canonical \D-parameter word, then $w<_{\lex}w'$ holds for $w,w'\in [k]^d$
if and only if $p\lbrac w\rbrac <_{\lex} p\lbrac w'\rbrac$. Hence, under the canonical bijection,
a restriction of a lexicographic ordering to a \D-subcube is a lexicographic ordering 
on~$[k]^d$.

\begin{theorem}\label{thm:two}
For every $d$ there exists $n$ with the following property: for every linear ordering $\before$
of $[2]^n$ there is a \D-subcube $C$ of $[2]^n$ such that the restriction of $\before$ to $C$ 
is the lexicographic ordering for one of the two linear orderings of~$[2]$.
\end{theorem}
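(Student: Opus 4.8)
The plan is to obtain the lexicographic subcube by two successive applications of the Graham--Rothschild theorem (the Ramsey theorem for parameter words), each time coloring low-dimensional subcubes according to the local behaviour of $\before$ and passing to a large monochromatic subcube. The two lexicographic orders allowed in the conclusion will correspond to the two possible ``orientations'' coming from the first step.

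First I would record that a combinatorial line (a $1$-subcube) in $[2]^n$ consists of a pair $x,y$ that is comparable in the coordinatewise partial order, with $x$ carrying $1$'s and $y$ carrying $2$'s on the wildcard set; conversely every comparable pair arises this way. I would $2$-color each line by which of its two endpoints is $\before$-smaller. By Graham--Rothschild there is a large subcube $C_1$ on which all lines receive the same color; after possibly renaming the two symbols of $[2]$ (which swaps the two target lexicographic orders), this says that $\before$ restricted to $C_1$ extends the coordinatewise order with $1<2$, i.e.\ $u\before v$ whenever $u\le v$ coordinatewise.

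Working inside $C_1$, I would next $2$-color each canonical $2$-subcube by the order $\before$ induces on its four points. Since $\before$ extends the coordinatewise order, the points $\wf{11}$ and $\wf{22}$ are forced to be least and greatest, so only the two middle points are free: a $2$-subcube is \emph{lex-type} if $\wf{12}\before\wf{21}$ and \emph{colex-type} if $\wf{21}\before\wf{12}$. Graham--Rothschild then yields a large subcube $C_2$ all of whose $2$-subcubes share one type. The key characterization is that if every $2$-subcube of $C_2$ is lex-type, then $\before$ agrees with $\lex$ (for $1<2$) on all of $C_2$: given distinct $u,v$ with first differing coordinate $i$, say $u_i=1$ and $v_i=2$, splitting the coordinates where they differ into those on which $u$ is $1$ and those on which $u$ is $2$ exhibits $u,v$ as the middle points $p\lbrac\wf{12}\rbrac,p\lbrac\wf{21}\rbrac$ of a canonical $2$-subcube (canonicity holds exactly because $i$ is the first difference), and lex-type gives $u\before v$.

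The crux, and the step I expect to be the main obstacle, is ruling out the other outcome, that every $2$-subcube of $C_2$ is colex-type. I would show this is impossible in dimension at least $3$: inside any $3$-subcube, the colex-type constraints on the two canonical $2$-subcubes $\wf{1*_1*_2}$ and $\wf{*_1*_2*_1}$, namely $\wf{121}\before\wf{112}$ and $\wf{212}\before\wf{121}$, together with the coordinatewise relation $\wf{112}\before\wf{212}$ (from $\wf{112}\le\wf{212}$), produce the cycle $\wf{121}\before\wf{112}\before\wf{212}\before\wf{121}$, contradicting that $\before$ is a linear order. Hence the color returned by Graham--Rothschild must be lex-type, so by the characterization $\before$ is lexicographic on $C_2$. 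Choosing the dimensions so that $C_2$ has dimension at least $\max(d,3)$ and restricting to any $d$-subcube (a restriction of a lexicographic order is again lexicographic) then finishes the proof.
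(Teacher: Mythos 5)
Your proof is correct and follows essentially the same route as the paper: Graham--Rothschild to homogenize the ordering on low-dimensional subcubes, followed by exactly the three-letter cycle $\wf{121}\before\wf{112}\before\wf{212}\before\wf{121}$ that the paper itself uses (written there as $\wf{aab}\before\wf{bab}\before\wf{aba}\before\wf{aab}$ in \autoref{prop:lex}) to rule out the ``colex'' alternative. The differences are only in packaging --- you apply Graham--Rothschild twice with two colors (to lines, then to $2$-subcubes) where the paper applies it once with $(2^d)!$ colors to get full uniformity, and you verify lexicographicity by a direct pairwise argument rather than via lexicographic successors; the one small point to add is that when $u\le v$ coordinatewise your splitting yields no second wildcard and hence no $2$-subcube, so that case must be referred back to the line-homogenization step, which you have already established.
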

\autoref{thm:es} is an easy consequence of \autoref{thm:two}. Indeed, let $m=3^n$
and define the projection map $\pi\colon [2]^n\to [m]$ by $\pi(w)\eqdef\sum_{i=1}^n w_i3^{n-i}$.
The sequence $a_1,\dotsc,a_m$ then induces a linear ordering on $[2]^n$, where
$w\before w'$ whenever $a_{\pi(w)}<a_{\pi(w')}$. A lexicographically ordered \D-subcube
of $[2]^n$ then corresponds to a monotone subsequence of $a_1,\dotsc,a_n$ whose index set is 
a proper affine \D-cube.

\paragraph{Hales--Jewett-type result for general $[k]^n$.}
The naive generalization of \autoref{thm:two} to the case of $[k]^n$, with $k\geq 3$,
is false. As an example, define a linear ordering $\before$ on $[3]^n$ as follows:
for a word $w$ let $w_{12}$ be the word obtained from $w$ by replacing each $\wf{1}$
by $\wf{2}$, and set $w\before w'$ if either $w_{12}<_{\lex} w'_{12}$
or $w_{12}=w'_{12}$ and $w<_{\lex} w'$. This ordering is different
from any of the $3!$ lexicographic orderings, and is stable under restriction
to subcubes.

To describe the class of linear orderings that generalize the lexicographic ordering
for $k\geq 3$, we need a couple of auxiliary definitions. A \emph{Schr\"oder tree}
is a rooted plane\footnote{A plane tree is a tree in which children of a node
are ordered.} tree each of whose internal nodes has at least~$2$ children. A \emph{weakly decreasing
Schr\"oder tree} is a Schr\"oder tree with a binary relation $\prece$ on the set of internal nodes that satisfies:
\begin{enumerate}[label=(ST\arabic*)]
\item $\prece$ is a total preorder, i.e., $\prece$ is transitive, reflexive, and $a\prece b$ or $b\prece a$ for every
two nodes $a,b$.
\item Every path from the root is strictly decreasing.
\end{enumerate}

\begin{center}
\begin{tikzpicture}[level 1/.style={sibling distance=7em},level 2/.style={sibling distance=5em}]
\scoped[local bounding box=tree,every node/.style={draw,circle}]
\node (123456) {$4$}
   child { node (123) {$3$}
     child { node (12) {$1$}
       child { [fill] circle (2pt) }            
       child { [fill] circle (2pt) }            
     }
     child { [fill] circle (2pt) }     
   }
   child  { node (4567) {$2$}
     child [sibling distance = 2em] { [fill] circle (2pt) }            
     child [sibling distance = 2em] { node(56) {$1$} 
       child {[fill] circle (2pt)}
       child {[fill] circle (2pt)}
     }            
     child [sibling distance = 2em] { [fill] circle (2pt) }            
   };
\scoped[local bounding box=labeled tree,every node/.style={draw,circle},leaf/.style={below,draw=none,execute at begin node=$\scriptstyle\wf,execute at end node=$}, xshift=20em]
\node (1234567) {$4$}
   child { node (123) {$3$}
     child { node (12) {$1$}
       child { [fill] circle (2pt) node[leaf] {2} }
       child { [fill] circle (2pt) node[leaf] {4} }            
     }
     child { [fill] circle (2pt) node[leaf] {7} }     
   }
   child  { node (4567) {$2$}
     child [sibling distance = 2em] { [fill] circle (2pt) node[leaf] {1} }
     child [sibling distance = 2em] { node(56) {$1$} 
       child {[fill] circle (2pt) node[leaf] {5}}
       child {[fill] circle (2pt) node[leaf] {6}}
     }            
     child [sibling distance = 2em] { [fill] circle (2pt) node[leaf] {3} }            
   };
\node (labeled caption) [node distance=8ex, below=of labeled tree,text width=5cm,anchor=south] {Same, with leaves ordered by
$\wf{2}<\wf{4}<\wf{7}<\wf{1}<\wf{5}<\wf{6}<\wf{3}$};
\node [node distance=1ex,anchor=south] at (tree|-labeled caption) {A weakly decreasing Schr\"oder tree};
\end{tikzpicture}
\end{center}

For a weakly decreasing Schr\"oder tree $T$, we denote by $\prece_T$ the linear ordering on the internal
nodes of $T$. If $T$ has $k$ leaves, and we have an linear ordering on $[k]$,
we identify leaves of $T$ with the elements of $[k]$ by labeling leaves in the increasing
order. For a pair of leaves $\{\wf{a},\wf{b}\}\in\binom{[k]}{2}$
we write $[\wf{a},\wf{b}]_T$ for the bottommost node of $T$ that contains both $\wf{a}$ and~$\wf{b}$.

Given a linear ordering $<$ of $[k]$ and a weakly decreasing Schr\"oder tree~$T$ with $k$ leaves,
we can define a linear ordering~$\before_T$ on~$[k]^n$ as follows. Given two words $w,w'\in [k]^n$, we
consider all indices $i\in [n]$ such that $w_i\neq w_i'$. Among these, 
we pick the smallest $i$ such that $[w_j,w_j']_T \prece_T [w_i,w_i']_T$ for all $j\in [n]$.
We then declare $w\before_T w'$ if $w_i<w_i'$.

\begin{theorem}\label{thm:many}
For every $k$ and $d$ there exists $n$ with the following property: for every linear ordering $\before$
of $[k]^n$ there is a \D-subcube $C$ of $[k]^n$ such that the restriction of $\before$ to $C$ 
is equal to $\before_T$ for some weakly decreasing Schr\"oder tree $T$ with $k$ leaves and some linear ordering of~$[k]$.
\end{theorem}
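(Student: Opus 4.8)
The plan is to decouple a Ramsey-theoretic reduction from a purely structural characterization. First I would invoke the Graham--Rothschild partition theorem for parameter words (the Ramsey theorem for combinatorial subcubes): for every finite coloring of the $e$-subcubes of $[k]^n$ there is, provided $n$ is large enough, a subcube all of whose $e$-subcubes receive one color. I apply this iteratively for $e=1,2,\dots,d$, at stage $e$ coloring each $e$-subcube by the linear order it induces (via the canonical bijection) on $[k]^e$ — only $(k^e)!$ colors — and passing to a subcube on which this induced order is a fixed $\sigma_e$. Since every $e'$-face of an $e$-subcube already lies inside the previously stabilized subcube, the orders are compatible: the restriction of $\sigma_e$ to any of its $e'$-faces is $\sigma_{e'}$. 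After $d$ stages I obtain a $d$-subcube $C\cong[k]^d$ that is \emph{self-similar}: for every $e\le d$, all $e$-subcubes of $C$ induce the same order $\sigma_e$. It therefore suffices to prove the following \emph{Main Lemma}: a self-similar linear order $\sigma$ on $[k]^d$ equals $\before_T$ for some linear ordering $<$ of $[k]$ and some weakly decreasing Schr\"oder tree $T$. (This contains \autoref{thm:two} as the case $k=2$, where the only tree is a single node and $\before_T$ is lexicographic, so I can use that case as an anchor.)

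To prove the Main Lemma I would reconstruct the data $(<,T)$ from $\sigma$. The order $<$ is simply $\sigma_1$, the common order that every $1$-subcube induces on $[k]$; after relabeling, assume $1<2<\dots<k$. From $\sigma_2$ I read off a \emph{dominance} relation $\prece$ on $\binom{[k]}{2}$: given two pairs $\{a,b\}$ and $\{c,e\}$, place them on the two free coordinates of a $2$-subcube pointing in opposite $<$-directions and record which coordinate's pair dictates the $\sigma_2$-verdict; comparing the two placements (each pair on the earlier and on the later coordinate) distinguishes strict dominance from a tie. The candidate tree $T$ is then forced: $[\wf a,\wf b]_T$ must be the node of $\prece$-rank equal to that of $\{a,b\}$, so that $\prece$ plays the role of $\prece_T$.

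The crux, and the step I expect to be the main obstacle, is verifying that $\prece$ genuinely comes from a weakly decreasing Schr\"oder tree and that the associated rule computes $\sigma$. That $\prece$ is a total preorder (axiom (ST1)) is immediate from comparability in $\sigma_2$. The essential point is the ultrametric/triple condition guaranteeing a tree: for any three symbols the two $\prece$-largest of the three pairs must coincide. I would argue by contradiction — a triple with a unique strictly dominant pair lets one build three words inside a $3$-subcube whose pairwise $\sigma_3$-comparisons form a cycle (a Condorcet-type configuration), contradicting that $\sigma_3$ is a linear order; here self-similarity is exactly what forces each of the three comparisons to be governed by the asserted dominant coordinate. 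A companion argument, exploiting that $<=\sigma_1$ and $\prece$ both descend from the \emph{same} order $\sigma$, must establish the \emph{interval} property — for $a<b<c$ the spanning pair $\{a,c\}$ is $\prece$-maximal among the three — since this is what permits labeling the leaves of a plane tree in increasing $<$-order.

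Finally, with $(<,T)$ in hand, I would prove $\sigma=\before_T$ by a \emph{peeling} induction on the number of coordinates at which two words $w,w'$ differ. If they differ at a coordinate whose pair is strictly $\prece$-dominated by — or $\prece$-equivalent to but later than — the coordinate selected by the $\before_T$ rule, then, using a subcube in which that coordinate is free together with the two-dimensional dominance fact, one may alter that coordinate into an agreement without changing whether $w\before w'$. Iterating removes every non-decisive disagreement and reduces the comparison to the single decisive coordinate, where $\sigma_1=<$ delivers the verdict — which is precisely $\before_T$ by construction. I expect this peeling step and the cycle construction for the ultrametric condition to be the two most delicate parts, as both must combine global transitivity of $\sigma$ with self-similarity in order to exclude exotic self-similar orders and thereby pin $\sigma$ down to tree form.
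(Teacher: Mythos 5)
Your overall strategy tracks the paper's quite closely: Graham--Rothschild to pass to a uniform (``self-similar'') subcube, extraction of $<$ and of a dominance preorder $\prece$ on pairs from the induced orders in dimensions $1$ and $2$, verification of tree axioms including an ultrametric/interval condition, reconstruction of a weakly decreasing Schr\"oder tree, and an induction on the number of differing coordinates (your iterated application of Graham--Rothschild is a harmless variant of the paper's single application). But two of the steps you yourself flag as delicate have genuine gaps as proposed. First, the tree reconstruction via ``the node of $\prece$-rank equal to that of $\{a,b\}$'' is not well defined: a weakly decreasing Schr\"oder tree may contain several distinct, pairwise non-nested internal nodes that are tied in the preorder (e.g.\ a root whose two internal children have equal rank), so the rank of a pair does not determine its node. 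The paper instead identifies the node generated by $\{\wf{a},\wf{b}\}$ with the \emph{widest} interval $\approx$-equivalent to $[\wf{a},\wf{b}]$ and must then prove, using the ultrametric property, that these widened intervals form a laminar family; some such device is needed beyond rank.

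Second, and more seriously, the peeling step fails as stated. ``Altering a non-decisive coordinate into an agreement without changing whether $w\before w'$'' is only a one-sided deduction: if changing $w_j$ to $w_j'$ moves $w$ \emph{up} in the order, transitivity combined with the peeled comparison yields $w\before w'$; but if it moves $w$ \emph{down}, transitivity gives nothing, and the two-dimensional dominance fact does not apply because $w$ and $w'$ still differ in more than two coordinates. Concretely, take the lexicographic order on $[2]^3$ with $w=\wf{122}$ and $w'=\wf{211}$: every non-decisive coordinate moves the wrong way, e.g.\ peeling coordinate $2$ replaces $w$ by $\wf{112}\before\wf{122}$, so knowing $\wf{112}\before\wf{211}$ does not yield $\wf{122}\before\wf{211}$. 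The paper's \autoref{lem:ident} circumvents exactly this by duplicating the decisive symbol $w_i$ (embedding the comparison into a subcube one dimension larger where $w_i$ appears twice), so that one copy of $w_i$ is spent in a two-coordinate move together with the recalcitrant neighboring coordinate while the other copy remains decisive for the inductive comparison; this is also why the paper only obtains $\mathord{\before}=\mathord{\before}_T$ on $(n-1)$\nobreakdash-subcubes rather than on all of $[k]^n$. Your proposal needs an idea of this kind; the peeling as described does not close.
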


\parshape=13 0cm.86\hsize 0cm.86\hsize 0cm.86\hsize 0cm.86\hsize 0cm.86\hsize 0cm\hsize 0cm\hsize 0cm\hsize 0cm\hsize 0cm\hsize 0cm\hsize 0cm\hsize 0cm \hsize
For
\vadjust{\hfill\smash{\raise -11ex\llap{%
\tikz[scale=0.5,every node/.style={draw,circle,transform shape}] 
{
 \node (123) {$2$}
 child { node {$1$}
   child {[fill] circle (2pt)}
   child {[fill] circle (2pt)}
 } 
 child {[fill] circle (2pt)}
 ;
}\quad}}}
example, the ordering on $[3]^n$ above is obtained from the tree depicted on the right,
under the usual ordering $\wf{1}<\wf{2}<\wf{3}$. Another ordering on $[3]^n$ can be obtained by taking a mirror image of the tree on the right.
The usual lexicographic ordering is obtained by taking $T=
\tikz[baseline=-1.1ex,scale=0.38,every node/.style={draw,circle,transform shape},level distance=3ex,sibling distance=1em] 
{
  \node (123) {}
  child {[fill] circle (2pt)}
  child {[fill] circle (2pt)} 
  child {[fill] circle (2pt)};
}$. In general, Bodini, Genitrini and Naima \cite[Section 3.2]{bodini_genitrini_naima} showed that the number 
of weakly decreasing Schr\"oder trees with $k$ leaves is 
equal to the $(k-1)$'st ordered Bell number, and hence is asymptotic to $\frac{(k-1)!}{2(\ln 2)^k}$.

\paragraph{An extension of Ramsey's theorem.}
Interestingly, after proving the main result in this paper (\autoref{thm:many}), we found that 
there is another way to prove \autoref{thm:es}, which relies on an extension of Ramsey's theorem.
\begin{theorem}\label{thm:ramsey}
For every $d$ and $r$, there exists $m$ such that for every $r$\nobreakdash-edge-coloring of the complete graph on $[m]$
there is a monochromatic proper affine \D-cube.
\end{theorem}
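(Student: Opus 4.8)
The plan is to prove the theorem via the hypergraph Ramsey theorem, reducing an arbitrary edge-coloring of $[m]$ to a highly structured ``canonical'' coloring for which a monochromatic cube can be found by hand. First I would restrict attention to affine cubes whose generators $x_1,\dots,x_d$ are blocks of a fixed super-increasing sequence $g_1<g_2<\dots<g_N$ (say $g_i=3^i$, and $x_0=0$); any such cube is automatically proper, since super-increasing generators have distinct subset sums, and the color of an edge joining two cube-vertices $v_S=\sum_{i\in S}x_i$ and $v_T=\sum_{i\in T}x_i$ is determined by the chosen generators alone. Coloring each configuration of generators by the resulting pattern of edge-colors (a fixed finite palette, of size at most $r^{\binom{2^d}{2}}$) and applying Ramsey's theorem for hypergraphs of sufficiently large uniformity, I would pass to a large set $Y\subseteq[N]$ on which this coloring is \emph{canonical}: the color of the edge joining two cube-vertices depends only on the order-type of the corresponding pair of index sets, not on which generators of $Y$ are used. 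Taking $Y$ as large as we please (by taking $N$ large) and grouping consecutive elements of $Y$ into blocks then realizes, inside these affine cubes, the restriction of a single order-invariant edge-coloring $\Phi$ to an arbitrary combinatorial subcube of $[2]^{D}$.

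It therefore suffices to show that every order-invariant $r$-edge-coloring $\Phi$ of $[2]^{D}$ has a monochromatic combinatorial $d$-subcube. Here I would analyze $\Phi$ through the topmost coordinate at which two vertices disagree, and exploit the two remaining degrees of freedom: the sizes of the blocks and which blocks are left free. A further pigeonhole---an auxiliary Ramsey argument applied to the block sizes---removes the dependence of $\Phi$ on block lengths (as a toy instance, a parity coloring becomes constant once all block sizes are even), after which one induces on $d$, using that the flip of the top coordinate already has a forced color, to collapse all pair-types to a single color.

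The hard part is exactly this last step, and it is worth saying why a direct appeal to the Hales--Jewett or Graham--Rothschild theorem does \emph{not} work. Encoding an ordered pair of vertices of $[2]^n$ coordinatewise turns the edge-coloring into a vertex-coloring of $[4]^n$, but the monochromatic subcube those theorems produce has uncontrolled fixed coordinates, and an ``off-diagonal'' fixed coordinate corresponds to a pair of vertices forced to disagree there---which does not describe the edges inside a single affine cube. The canonical-coloring reduction is precisely what sidesteps this obstruction, trading the uncontrolled-fixed-coordinate problem for the analysis of order-invariant colorings. Within that analysis the genuine difficulty is to force a single color on \emph{all} pair-types simultaneously; this is the same lexicographic (and, for larger alphabets, Schr\"oder-tree) phenomenon that drives the rest of the paper, and I expect it to be the main obstacle to overcome.
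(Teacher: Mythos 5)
There is a genuine gap, and it is located exactly where you place the ``hard part'': the reduced statement you propose to prove is false. By restricting to affine cubes whose generators $x_1,\dots,x_d$ are sums of disjoint blocks of the super-increasing basis (with $x_0$ either $0$ or another disjoint subset sum), you force every vertex of the cube to be of the form $\sum_{i\in F\cup\bigcup_{j\in S}B_j}g_i$, i.e.\ the $2^d$ vertices correspond to the subsets of a \emph{combinatorial} $d$-subcube of $[2]^D$. Now consider the order-invariant $2$-coloring $\Phi(\{A,B\})$ that records whether $A$ and $B$ are comparable under inclusion (equivalently, whether both symbols ``$A$ only'' and ``$B$ only'' occur in the order type). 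Every combinatorial $d$-subcube with $d\geq 2$ contains the comparable pair $(V_\emptyset,V_{\{1\}})$ and the incomparable pair $(V_{\{1\}},V_{\{2\}})$, so no choice of block sizes, block positions, or fixed coordinates can make it monochromatic; this coloring is realized by an honest edge-coloring of $[m]$ (color $\{\sum_{i\in A}g_i,\sum_{i\in B}g_i\}$ by comparability of $A$ and $B$, extend arbitrarily). So the obstacle you flag is not merely difficult --- no amount of pigeonholing on block lengths or induction on $d$ can overcome it, because the class of affine cubes you search within genuinely contains no monochromatic member for this coloring.

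The missing idea is that the generators must be allowed to be \emph{differences} of basis elements, e.g.\ $x_j=3^{a_j}-3^{b_j}$, so that the $2^d$ vertices of the affine cube form a pairwise \emph{incomparable} family of digit-sets (an antichain, each ``block'' of two coordinates taking the values $\wf{01}$ or $\wf{10}$ rather than $\wf{00}$ or $\wf{11}$). This is not a combinatorial subcube, which is precisely why it escapes the counterexample: every edge of such a cube is an incomparable pair. This is the paper's route: it colors each $2$-subcube of $[2]^n$ by the color of its unique incomparable pair $\{\wf{01},\wf{10}\}$, applies Graham--Rothschild to find a $2d$-subcube on which this coloring is constant (so that \emph{all} incomparable pairs inside it get one color, regardless of their order type), and then takes $S=\{w:w_{2i-1}\neq w_{2i}\}$, whose image under $\pi(w)=\sum_i w_i3^{n-i}$ is a proper affine $d$-cube with all edges incomparable. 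Note also that your canonical-coloring reduction, even if retained, would not suffice on its own after switching to antichain configurations: an order-invariant coloring can still distinguish the various incomparable order types (for instance by whether the two index sets meet), so one still needs the Graham--Rothschild step (or an equivalent parameter-word Ramsey argument) rather than plain hypergraph Ramsey on the basis indices.
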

It is easy to deduce \autoref{thm:es} from \autoref{thm:ramsey}: Given a sequence $a_1,\dotsc,a_m$ 
we color edge $\{i,j\}$, with $i<j$, with one of two colors according to whether $a_i<a_j$ or~$a_j<a_i$.
A monochromatic clique in this coloring then corresponds to a monotone subsequence.

\paragraph{Paper organization.}
The bulk of the paper is occupied by the proof of \autoref{thm:many},
which is split into two parts. We first show that, for any linear ordering $\before$ of~$[k]^n$,
there is a large subcube $C$ such that the restriction of $\before$ to $C$ enjoys a certain symmetry property,
which we call \emph{uniformity}. For~$k=2$, the uniform linear orderings are then easily seen to be lexicographic, 
which proves \autoref{thm:two}. That is done in \autoref{sec:uniform}. The case of general $k$ requires a
more careful analysis of uniform linear orderings, which we carry out in \autoref{sec:general}. 
We conclude the paper with the proof of \autoref{thm:ramsey} in \autoref{sec:ramsey} and with some open problems
in \autoref{sec:open}.

\paragraph{Acknowledgment.} We thank James Cummings for comments on the earlier version of this paper.

\section{Uniform linear orderings}\label{sec:uniform}
Given a linear ordering $\before$ on $[k]^n$, a restriction of $\before$ to a \D-subcube induces,
under the canonical bijection, one of~$(k^d)!$ linear orderings on~$[k]^d$. We say that $\before$
is \emph{\D-uniform} if all restrictions to \D-subcubes induce the same linear ordering
on~$[k]^d$. An ordering that is $d$-uniform for all $d$ is called simply \emph{uniform}.

\begin{lemma}\label{lem:uniform}
For every $k$ and $d$ there exists $n$ with the following property: for every linear ordering
$\before$ of $[k]^n$ there is a \D-subcube $C$ of $[k]^n$ such that the restriction of $\before$ to $C$ 
is uniform.
\end{lemma}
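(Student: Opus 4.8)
The plan is to reduce \autoref{lem:uniform} to the Graham--Rothschild theorem (the parameter-set strengthening of Hales--Jewett), applied once for each dimension $\ell$. Fix $\ell$ and color every $\ell$-parameter word $p$ of length $n$ by the linear ordering that $\before$ induces on $[k]^\ell$ through the map $w\mapsto p\lbrac w\rbrac$; this is a coloring with $r=(k^\ell)!$ colors. The Graham--Rothschild theorem then guarantees that, once $n$ is sufficiently large in terms of $k$, $\ell$, $r$, and a target dimension $m$, there is an $m$-parameter word $p^\ast$ of length $n$ such that all composites $p^\ast\lbrac q\rbrac$, with $q$ ranging over $\ell$-parameter words of length $m$, receive the same color. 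Translating through the canonical bijection, this says exactly that every $\ell$-subcube of the $m$-subcube $C=\{p^\ast\lbrac w\rbrac:w\in[k]^m\}$ induces one and the same ordering on $[k]^\ell$; that is, $C$ is $\ell$-uniform.

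The second ingredient is that $\ell$-uniformity, once attained, is inherited by every subcube. If all $\ell$-subcubes of a subcube $C'$ induce a common ordering $\sigma_\ell$ on $[k]^\ell$, and $C''\subseteq C'$ is any subcube, then every $\ell$-subcube of $C''$ is again an $\ell$-subcube of $C'$, because canonical parameter words compose to canonical parameter words and the canonical bijections are therefore compatible along the nested chain $C''\subseteq C'\subseteq[k]^n$ (as recorded in the discussion preceding this section). Hence each $\ell$-subcube of $C''$ still induces $\sigma_\ell$, so $C''$ is $\ell$-uniform as well, and a level of uniformity once achieved persists under all further passages to subcubes.

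With these two ingredients the lemma follows by iteration. A linear ordering on $[k]^d$ is uniform precisely when it is $\ell$-uniform for each $\ell\in\{1,\dots,d-1\}$: the case $\ell=d$ is automatic, since $[k]^d$ has a unique $d$-subcube, namely itself, while the cases $\ell>d$ and $\ell=0$ are vacuous. I therefore process the levels $\ell=1,2,\dots,d-1$ one at a time, choosing dimensions backwards from the target: set $m_{d-1}=d$ and, for $\ell=d-1,d-2,\dots,1$, let $m_{\ell-1}$ be large enough that the Graham--Rothschild theorem applied to the $(k^\ell)!$-coloring above produces an $\ell$-uniform subcube of dimension $m_\ell$ inside any cube of dimension $m_{\ell-1}$; finally put $n=m_0$. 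Starting from $[k]^n$ and applying the first paragraph successively for $\ell=1,\dots,d-1$ produces a nested chain of subcubes whose dimensions drop from $n=m_0$ down to $m_{d-1}=d$. By the inheritance observation the final $d$-subcube is $\ell$-uniform for every $\ell\le d-1$, hence uniform, which is what we wanted; the resulting $n$ depends only on $k$ and $d$ (it is a $(d-1)$-fold iterate of the Graham--Rothschild function).

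The argument uses nothing about $\before$ beyond the finiteness of the color set, so the only real work is organizational. The step needing the most care is the translation in the first paragraph between the Graham--Rothschild homogeneity statement (phrased for parameter words and their composites) and $\ell$-uniformity of a subcube (phrased through the canonical bijection): one must check that permuting the $*$-symbols of the homogeneous word $p^\ast$ to make it canonical merely reindexes the family of composites $p^\ast\lbrac q\rbrac$ and hence preserves homogeneity. This is precisely where the compatibility of canonical bijections under composition is used, and it is the main, though routine, obstacle.
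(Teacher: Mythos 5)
Your argument is correct, and it reaches the lemma by a genuinely different organization of the same main tool. You apply the Graham--Rothschild theorem once for each level $\ell=1,\dots,d-1$, coloring $\ell$-subcubes by the induced ordering on $[k]^\ell$, and you chain the resulting subcubes together using the (correct) observation that $\ell$-uniformity, once achieved, persists under passage to further subcubes because canonical parameter words compose to canonical parameter words. The paper instead applies Graham--Rothschild only once, at level $d$ with target dimension $2d-1$: given a $(2d-1)$-subcube $C_0$ all of whose $d$-subcubes carry the same induced ordering, it passes to the $d$-subcube $C$ of $C_0$ given by the canonical word $\wf{1}\dotsb\wf{1}\wf{*_1}\dotsb\wf{*_d}$ (with $d-1$ leading $\wf{1}$'s) and observes that every $d'$-subcube of $C$ extends to a $d$-subcube of $C_0$ inside which it occupies the fixed position $\wf{1}\dotsb\wf{1}\wf{*_1}\dotsb\wf{*_{d'}}$; monochromaticity at level $d$ alone therefore forces uniformity at every level simultaneously. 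Your route trades this padding trick for $d-1$ nested invocations of the Graham--Rothschild function: it is more modular (each level is a self-contained homogenization step plus the inheritance observation), but it produces an iterated and hence far larger bound on $n$; since the paper makes no quantitative claims, that costs nothing here. The one point you rightly flag --- that the coloring should live on subcubes, i.e.\ on canonical parameter words, so that homogeneity translates into $\ell$-uniformity under the canonical bijection --- is handled in the paper by defining the color of a subcube directly as the restriction of $\before$ under the canonical bijection, which sidesteps the reindexing check entirely.
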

We shall deduce this from the following special case of the Graham--Rothschild theorem \cite{graham_rothschild} 
(see also \cite{promel_voigt} for a transparent exposition and a short proof).
\begin{lemma}[Graham--Rothschild theorem, the trivial group case]
For every $d,D,k,t$ there exists $n=n(d,D,k,t)$ such that, for any $t$-coloring of \D-subcubes of $[k]^n$,
there is a $D$\nobreakdash-subcube of $[k]^n$ all of whose \D-subcubes are monochromatic.
\end{lemma}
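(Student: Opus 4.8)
The plan is to induct on the pair $(d,D)$ ordered lexicographically with $d$ primary, taking as the single external input the Hales--Jewett theorem, which is exactly the case $d=0$, $D=1$ of the statement (from which I first derive its iterated ``subspace'' form). The guiding reformulation is the composition identity recalled in the excerpt: if $p$ is a canonical $D$-parameter word of length $n$, then the $d$-subcubes of the $D$-subcube it defines are precisely the words $p\lbrac p'\rbrac$ as $p'$ ranges over the finitely many $d$-parameter words of length $D$. Writing $\chi$ for the given $t$-coloring of $d$-subcubes, the task is therefore to produce such a $p$ for which $p'\mapsto\chi(p\lbrac p'\rbrac)$ is constant. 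The inner base case $D=d$ is trivial and needs only $n=d$, since then the only $d$-subcube of a $d$-subcube is itself.

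\textbf{Base case $d=0$.} Here $0$-subcubes are points, so I must find a monochromatic $D$-subcube, i.e.\ the subspace form of Hales--Jewett, which I would deduce from the ordinary form by an inner induction on $D$. Split the coordinates of $[k]^n$ into a left block of length $\ell$ and a right block of length $m$, and color each point $x$ of the left block by the entire restricted coloring $y\mapsto\chi(xy)$ of the right block, a palette of size at most $t^{k^m}$. The $(D-1)$-dimensional case applied in the left block yields a $(D-1)$-subcube on which this trace is constant, and one application of Hales--Jewett in the right block (with $m$ chosen accordingly) homogenizes the last direction; the product of the two subspaces is the sought monochromatic $D$-subcube.

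\textbf{Inductive step.} For $(d,D)$ with $D>d$, I may assume the statement for every lexicographically earlier pair, namely all $(d',D')$ with $d'<d$ and all $(d,D')$ with $D'<D$. Splitting $[k]^n$ into a left block of length $\ell$ and a right block of length $m$ as before, I seek the target $D$-subcube in the product form $p=p_Lp_R$, where $p_L$ is a $(D-1)$-subcube of the left block (using $\wf{*_1},\dotsc,\wf{*_{D-1}}$) and $p_R$ is a combinatorial line of the right block (using $\wf{*_D}$). A $d$-subcube of this product is $p\lbrac p'\rbrac$ for a $d$-parameter word $p'$ of length $D$, whose first $D-1$ coordinates feed into $p_L$ and whose last coordinate $p'_D$ feeds into $p_R$. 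If $p'_D$ is a letter, the left part $p'_1\dotsm p'_{D-1}$ is a $d$-parameter word of length $D-1$; if $p'_D=\wf{*_i}$, it is an $e$-parameter word with $e\in\{d-1,d\}$ according to whether $\wf{*_i}$ recurs among the first $D-1$ coordinates. In every case the left part is an $e$-parameter word of length $D-1$ with $e\le d$, hence is governed by an induction hypothesis earlier than $(d,D)$.

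\textbf{The main obstacle.} The step I expect to be hardest is to homogenize $\chi$ over all of these straddling possibilities at once, which I would do with a single trace coloring of the left block: color each left-block subcube by the tuple of $\chi$-colors it produces when combined with every fixed-letter or wildcard completion available in the right block. Folding the earlier induction hypotheses for the pairs $(e,D-1)$ with $e\le d$ into one application then yields a $(D-1)$-subcube $p_L$ on which all of these traces are constant, after which Hales--Jewett collapses the right block to a monochromatic line $p_R$. The delicate points—precisely those treated in full in \cite{graham_rothschild} and streamlined in \cite{promel_voigt}—are to design the trace coloring so that it simultaneously captures every way a $d$-subcube can split its wildcards across the block boundary, and to choose $\ell$ and $m$ in the correct order so that the induction hypotheses and the Hales--Jewett bound compose into a single finite $n=n(d,D,k,t)$.
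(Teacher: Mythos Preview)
The paper does not prove this lemma at all: it is stated as a quotation of a known theorem, with a citation to Graham--Rothschild \cite{graham_rothschild} and a pointer to Pr\"omel--Voigt \cite{promel_voigt} for a short proof. There is therefore no ``paper's own proof'' to compare against; the lemma is used as a black box in the proof of \autoref{lem:uniform}.

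Your sketch is in the spirit of the standard product-and-trace-coloring inductions in those references, and you correctly flag the genuinely delicate step. One place where the sketch is looser than it may appear: the phrase ``folding the earlier induction hypotheses for the pairs $(e,D-1)$ with $e\le d$ into one application'' hides real work. A single invocation of the $(d,D-1)$ case produces only a $(D-1)$-subcube, inside which you can no longer further homogenize the $e$-subcube traces for $e<d$. What one actually needs is to iterate: first pass to a very large subcube on which the $(d-1)$- and lower-dimensional traces are constant (this uses the hypotheses $(d',M)$ for $d'<d$ and $M$ much larger than $D-1$, which your lexicographic order does make available), and only then apply the $(d,D-1)$ case. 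Getting the nesting order and the sizes $\ell,m$ to compose correctly is precisely the bookkeeping carried out in the cited sources, so your deferral to them is appropriate---but be aware that ``one application'' is not literally what happens.
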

\begin{proof}[Proof of \autoref{lem:uniform}]
Let $\chi$ be the coloring of \D-subcubes of $[k]^n$ in $(k^d)!$ many colors that
assigns to each \D-subcube $C$ the restriction of $\before$ to~$C$.
By the Graham--Rothschild theorem, if $n$ is large enough, there is a $(2d-1)$-subcube
$C_0$ of~$[k]^n$ on which~$\chi$ is monochromatic. We use the canonical bijection to identify
this subcube with $[k]^{2d-1}$. Let $C$ be the subcube of $C_0$ induced by the canonical word
$\wf{1}\dotsb \wf{1} \wf{*_1} \wf{*_2}\dotsb \wf{*_d}$ (with $d-1$ many $\wf{1}$'s).
We claim that the restriction of $\before$ to $C$ is uniform.

Indeed, let $C'$ be an arbitrary $d'$\nobreakdash-subcube $C'$ of $C$. We can then complete $C'$ to a \D-subcube $C_0'$ of $C_0$
in such a way that $C'$ is the subcube of $C_0'$ induced by the canonical word $\wf{1}\dotsb \wf{1}\wf{*_1}\dotsb\wf{*_{d'}}$.
In this way every $d'$\nobreakdash-subcube $C'$ is identified with the same $d'$\nobreakdash-subcube of $[k]^d$, and so
is ordered in the same way.
\end{proof}

If $\before$ is uniform, with slight abuse of notation, we think of $\before$ as a linear ordering
on each $[k]^d$ for $d=1,2,\dotsc$. In particular, $\before$ induces an ordering on $[k]$,
which we denote $<$. The following implies \autoref{thm:two}.
\begin{proposition}\label{prop:lex}
Let $\before$ be a uniform linear ordering on $[k]^n$, for $n\geq 3$. Suppose $\wf{a},\wf{b}\in [k]$ satisfy
$\wf{a}<\wf{b}$. Then the restriction of $\before$ to $\{\wf{a},\wf{b}\}^n$ is the lexicographic ordering
for the restriction of $<$ to $\{\wf{a},\wf{b}\}$.
\end{proposition}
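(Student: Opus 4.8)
The plan is to treat $\{\wf a,\wf b\}$ as a binary alphabet and to extract from uniformity exactly two operations. First, $1$-uniformity says that restricting $\before$ to any $1$-subcube induces the order $<$ on $[k]$; concretely, for a canonical $1$-parameter word $p$ (a single star at some position, constants elsewhere) we have $p\lbrac\wf a\rbrac\before p\lbrac\wf b\rbrac$ since $\wf a<\wf b$. This yields a \emph{monotonicity} fact: if $w,w'\in\{\wf a,\wf b\}^n$ satisfy $w_j\le w_j'$ in every coordinate (reading $\wf a<\wf b$) and $w\ne w'$, then $w\before w'$. Indeed, flip the differing coordinates from $\wf a$ to $\wf b$ one at a time; each flip is a single-coordinate change, hence increasing by the above, and transitivity finishes it. Second, I will use the general form of uniformity: for a canonical parameter word $p$ and any $v,v'$, one has $v\before v'$ if and only if $p\lbrac v\rbrac\before p\lbrac v'\rbrac$, because the restriction of $\before$ to the subcube induced by $p$ is, by uniformity, the same order read through the canonical bijection.

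Next I would reduce the whole statement to a single extreme comparison. Take distinct $w,w'$ whose first difference is at position $i$ with $w_i=\wf a$, $w_i'=\wf b$, so that $w<_{\lex}w'$. Applying uniformity to the parameter word given by the common prefix as constants followed by $\wf{*_1}\dotsb\wf{*_{\ell}}$ lets me discard the prefix and assume the first difference occurs at position $1$, for words of some length $\ell\le n$. Writing $\wf a\wf b^{\ell-1}$ and $\wf b\wf a^{\ell-1}$ for the two extreme words of length $\ell$, monotonicity gives $w\before\wf a\wf b^{\ell-1}$ (or equality) and $\wf b\wf a^{\ell-1}\before w'$ (or equality), since $w$ is coordinatewise below $\wf a\wf b^{\ell-1}$ and $\wf b\wf a^{\ell-1}$ is coordinatewise below $w'$. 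Hence it suffices to prove the strict inequality $\wf a\wf b^{\ell-1}\before\wf b\wf a^{\ell-1}$: the sandwich then forces $w\before w'$, so $\before$ agrees with $<_{\lex}$ on all distinct pairs, which is the assertion.

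The extreme comparison in turn collapses to the case $\ell=2$: applying uniformity to the $2$-parameter word $\wf{*_1}\wf{*_2}\dotsb\wf{*_2}$ of length $\ell$ (one $\wf{*_1}$ followed by $\ell-1$ copies of $\wf{*_2}$) sends $\wf{ab}\mapsto\wf a\wf b^{\ell-1}$ and $\wf{ba}\mapsto\wf b\wf a^{\ell-1}$, so $\wf a\wf b^{\ell-1}\before\wf b\wf a^{\ell-1}$ is equivalent to $\wf{ab}\before\wf{ba}$. Everything therefore rests on showing $\wf{ab}\before\wf{ba}$, and this is the step I expect to be the main obstacle. Monotonicity is powerless here, since $\wf{ab}$ and $\wf{ba}$ are incomparable coordinatewise, so their order is not pinned down by the one-dimensional data; this is precisely where the hypothesis $n\ge 3$ must enter.

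To break this symmetry I would argue by contradiction, assuming $\wf{ba}\before\wf{ab}$, and pass to length $3$. The \emph{straddling} duplication $\wf{*_1*_2*_1}$ gives, by uniformity, $\wf{bab}\before\wf{aba}$, while the duplication $\wf{*_1*_1*_2}$ gives $\wf{bba}\before\wf{aab}$. Combining these with the two monotonicity steps $\wf{aba}\before\wf{bba}$ and $\wf{aab}\before\wf{bab}$ (each a single $\wf a\to\wf b$ flip in the first coordinate) produces the cycle $\wf{bab}\before\wf{aba}\before\wf{bba}\before\wf{aab}\before\wf{bab}$, contradicting irreflexivity. Hence $\wf{ab}\before\wf{ba}$, and the reduction above completes the proof. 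The only point needing care is to keep track of the canonical bijection so that every substitution $p\lbrac\cdot\rbrac$ is applied to canonical parameter words and the induced comparisons are read off consistently; granting that bookkeeping, the argument is exactly as outlined.
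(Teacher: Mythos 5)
Your proof is correct and follows essentially the same route as the paper: both arguments reduce everything to the single comparison $\wf{ab}\before\wf{ba}$ and establish it by deriving a cyclic chain of inequalities among words of length $3$ from the assumption $\wf{ba}\before\wf{ab}$ together with uniformity. The only differences are cosmetic — you use a $4$-cycle where the paper uses a $3$-cycle, and you deduce the full lexicographic order via a coordinatewise-monotonicity sandwich rather than the paper's lexicographic-successor argument.
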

\begin{proof}
We claim that $\wf{ab}\before \wf{ba}$. Indeed, if $\wf{ba}\before \wf{ab}$, then we reach a contradiction
by considering the sequence of inequalities
\[
  \wf{aab}\before \wf{bab} \before \wf{aba} \before \wf{aab},
\]
where the first inequality follows because $a\before b$, whereas the last two follow because $\wf{ba}\before \wf{ab}$.

To show that the restriction of $\before$ to $\{\wf{a},\wf{b}\}^n$ coincides with the lexicographic ordering
on $\{\wf{a},\wf{b}\}^n$, it suffices to show $w\before w'$ whenever $w'$ is the successor of $w$
in the lexicographic ordering. If $w=\wf{a}^n$, then $w'=\wf{a}^{n-1}\wf{b}$,
and $w\before w'$ because $\wf{a}\before \wf{b}$. Otherwise $w=w_0\wf{ab}^t$ and
$w'=w_0\wf{ba}^t$ for some nonnegative integer $t$ and a word~$w_0$. In that case
$w\before w'$ because $\wf{ab}^t\before \wf{ba}^t$ follows from $\wf{ab}\before \wf{ba}$.
\end{proof}

\section{The proof of \autoref{thm:many}}\label{sec:general}
By \autoref{lem:uniform}, it suffices to show that every uniform linear ordering is of the form
$\before_T$ for some weakly decreasing Schr\"oder tree $T$ and some linear ordering $<$ on~$[k]$. 
By permuting the elements of $[k]$ if necessary, we may assume that the restriction of the uniform linear ordering
to $[k]$ coincides with the usual ordering on~$\N$. Hence, in this section $<$ denotes the usual ordering on~$[k]$.

Call a subinterval of $[k]$ \emph{nontrivial} if it is of length at least $2$.
Given a linear ordering $\before$ on $[k]^n$, we define a binary relation $\prece$ on nontrivial subintervals of $[k]$
as follows. For any $\wf{a},\wf{b},\wf{c},\wf{d}\in [k]$ with $\wf{a}< \wf{b}$ and $\wf{c}< \wf{d}$,
we write $[ \wf{a},\wf{b}] \prece [ \wf{c},\wf{d}]$ if
$\wf{cb}\before \wf{da}$. If both $[ \wf{a},\wf{b}] \prece [ \wf{c},\wf{d}]$
and $[ \wf{c},\wf{d}] \prece [ \wf{a},\wf{b}]$ hold, then we write 
$[ \wf{a},\wf{b} ] \approx [ \wf{c},\wf{d} ]$.

Note that if $\mathord{\before}=\mathord{\before}_T$, then $[ \wf{a},\wf{b} ] \prece [ \wf{c},\wf{d} ]$ holds
if and only if $[ \wf{a},\wf{b}]_T \prece_T [ \wf{c},\wf{d}]_T$ holds. 
Furthermore, if $\mathord{\before}=\mathord{\before}_T$, then $\prece$ satisfies the following three properties:
\begin{enumerate}
\item Transitivity: $[ \wf{a},\wf{b} ] \prece [ \wf{c},\wf{d} ]$ and
$[ \wf{c},\wf{d} ] \prece [ \wf{e},\wf{f} ]$ together imply
$[ \wf{a},\wf{b} ] \prece [ \wf{e},\wf{f} ]$.
\item Comparability: $[ \wf{a},\wf{b} ] \prece [ \wf{c},\wf{d} ]$ or $[ \wf{c},\wf{d} ] \prece [ \wf{a},\wf{b} ]$.
\item Ultrametric property: if $\wf{a}<\wf{b}<\wf{c}$, then $[ \wf{a},\wf{c} ]\approx \max([ \wf{a},\wf{b} ],[ \wf{b},\wf{c} ])$.
\end{enumerate}
Call any relation $\prece$ on nontrivial subintervals of $[k]$ \emph{tree-like} if it satisfies these three properties.
Given a uniform linear ordering on $[k]^n$, we first show that $\prece$ is tree-like, and then use $\prece$ to build
a weakly decreasing Schr\"oder tree. That is done in the next two lemmas. Then in \autoref{lem:ident}, we show that the ordering
induced by the resulting tree (almost) coincides with the original ordering on~$[k]^n$.

\begin{lemma} Suppose $n\geq 3$. If a linear ordering $\before$ on $[k]^n$ is uniform,
then $\prece$ is tree-like.
\end{lemma}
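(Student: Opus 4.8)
The starting point is a reduction principle supplied by uniformity: to compare two words $w,w'$, group the coordinates by the pair $(w_i,w_i')$, delete those with $w_i=w_i'$, and merge coordinates carrying the same pair. Concretely, there is a canonical parameter word $p$ and reduced words $u,u'$ --- one letter per distinct non-constant pair, listed in order of first appearance --- with $p\lbrac u\rbrac=w$ and $p\lbrac u'\rbrac=w'$, so that $w\before w'$ if and only if $u\before u'$. Thus every comparison collapses to one in which the differing coordinates carry pairwise distinct non-constant pairs, and in particular the relation $\prece$ is governed entirely by comparisons of two-letter words.

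Second, I would isolate the two-coordinate behaviour. Fix coordinates $1<2$ carrying value sets $\{\wf{p},\wf{q}\}$ and $\{\wf{r},\wf{s}\}$ with $\wf{p}<\wf{q}$ and $\wf{r}<\wf{s}$. Reduction together with the one-letter order shows that on $\wf{pr},\wf{ps},\wf{qr},\wf{qs}$ the induced order is a linear extension of the product of two $2$-element chains, whose only incomparable pair is $\{\wf{ps},\wf{qr}\}$. Hence there are exactly two possible orderings, and in each a single coordinate is decisive for every comparison on this subcube: either coordinate $1$ always decides or coordinate $2$ always does. By uniformity this decisive coordinate depends only on the two value sets and their positions, and when the sets coincide \autoref{prop:lex} forces the smaller-index coordinate to win. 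This packages $\before$ into a single dominance relation on nontrivial subintervals, and checking the four direction patterns shows that $[\wf{a},\wf{b}]\prece[\wf{c},\wf{d}]$ holds exactly when $\{\wf{c},\wf{d}\}$ dominates $\{\wf{a},\wf{b}\}$ (ties resolved leftward).

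With this dictionary, comparability and transitivity of $\prece$ become totality and transitivity of dominance. Totality says two sets cannot each lose when placed on the left; were it to fail for $\{\wf{p},\wf{q}\}$ and $\{\wf{r},\wf{s}\}$, the two assumed ``right wins'' comparisons would read $\wf{qr}\before\wf{ps}$ and $\wf{sp}\before\wf{rq}$, and I would derive a contradiction from the three-letter cycle $\wf{qrp}\before\wf{psp}\before\wf{prq}\before\wf{qrp}$: the first two steps are these comparisons after deleting a constant coordinate, and the last is $\wf{pq}\before\wf{qp}$ from \autoref{prop:lex}. Transitivity amounts to excluding a dominance $3$-cycle among three sets, which I would rule out by an entirely analogous three-coordinate cyclic argument; here the hypothesis $n\geq 3$ is exactly what makes room for the cycles. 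Throughout one must allow the value sets to share letters, but the product-of-chains analysis is unaffected by this.

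Finally, the ultrametric property. Reducing to the alphabet $\{\wf{a},\wf{b},\wf{c}\}$ with $\wf{a}<\wf{b}<\wf{c}$, it asserts that the outer set $\{\wf{a},\wf{c}\}$ is dominance-equivalent to the larger of the inner sets $\{\wf{a},\wf{b}\},\{\wf{b},\wf{c}\}$, the combinatorial shadow of the fact that the least common ancestor of $\wf{a}$ and $\wf{c}$ is the higher of those of $\wf{a},\wf{b}$ and $\wf{b},\wf{c}$. I would prove the two required comparisons, namely $[\wf{a},\wf{c}]\prece\max([\wf{a},\wf{b}],[\wf{b},\wf{c}])$ and its reverse, by the reduction principle, splitting on which inner set dominates and settling the tie by one more short cycle on $\{\wf{a},\wf{b},\wf{c}\}^3$. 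I expect this property to be the main obstacle: unlike comparability and transitivity, which follow formally once dominance is a total preorder, the ultrametric relation genuinely constrains three overlapping sets at once, so the case analysis on the relative heights of the inner pairs --- together with careful tracking of the leftward tie-break --- is where the real work lies.
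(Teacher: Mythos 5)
Your comparability and transitivity arguments are sound and, modulo being phrased as cycle-contradictions rather than direct chains, essentially coincide with the paper's: the comparability cycle $\wf{qrp}\before\wf{psp}\before\wf{prq}\before\wf{qrp}$ checks out (it is the contrapositive of the paper's chain $\wf{dad}\before\wf{cbd}\before\wf{dbc}$), and the ``entirely analogous'' three-coordinate cycle for transitivity does exist (from $\wf{cb}\before\wf{da}$, $\wf{ed}\before\wf{fc}$ and the assumed $\wf{fa}\before\wf{eb}$ one gets $\wf{edb}\before\wf{fcb}\before\wf{fda}\before\wf{edb}$). The ``decisive coordinate'' dictionary is more machinery than needed --- the relation $\prece$ is already defined directly by a two-letter comparison --- but it is not wrong.

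The genuine gap is the ultrametric property, which you explicitly defer (``where the real work lies'') without supplying the argument. The half $\max([\wf{a},\wf{b}],[\wf{b},\wf{c}])\prece[\wf{a},\wf{c}]$ is easy (e.g.\ $\wf{ab}\before\wf{ac}\before\wf{ca}$ gives $[\wf{a},\wf{b}]\prece[\wf{a},\wf{c}]$), but the other half is the one step in the whole lemma that cannot be done by merely deleting a constant coordinate: one must show that $\wf{ba}\before\wf{ac}$ (i.e.\ $[\wf{a},\wf{c}]\not\prece[\wf{a},\wf{b}]$) forces $\wf{bc}\before\wf{ca}$ (i.e.\ $[\wf{a},\wf{c}]\prece[\wf{b},\wf{c}]$). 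The paper does this via $\wf{bca}\before\wf{acc}\before\wf{caa}$, where the middle inequality $\wf{acc}\before\wf{caa}$ comes from restricting to the subcube $\wf{*_1*_2*_2}$ and invoking $\wf{ac}\before\wf{ca}$ from \autoref{prop:lex} --- exactly an application of the coordinate-merging half of your ``reduction principle,'' which you set up but never deploy. Your plan of ``splitting on which inner set dominates'' does not lead anywhere by itself: e.g.\ knowing $[\wf{b},\wf{c}]\prece[\wf{a},\wf{b}]$, i.e.\ $\wf{ac}\before\wf{bb}$, gives no direct route to the needed $\wf{ac}\before\wf{ba}$, since $\wf{ba}\before\wf{bb}$ points the wrong way; you still need the doubling step above (after which transitivity finishes the case analysis). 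Until that cycle $\wf{bca}\before\wf{acc}\before\wf{caa}\before\wf{bca}$ (or an equivalent) is exhibited, the proof of tree-likeness is incomplete. A minor further point: your remark that transitivity ``follows formally once dominance is a total preorder'' is circular as stated, since transitivity of $\prece$ \emph{is} transitivity of that preorder; what saves you is the concrete three-coordinate cycle, not the formal reduction.
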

\begin{proof}
\textit{Transitivity:} By the assumption we have $\wf{cb}\before \wf{da}$ and $\wf{ed}\before \wf{fc}$.
From this it follows that $\wf{edb}\before \wf{fcb}\before \wf{fda}$, which implies that $\wf{eb}\before \wf{fa}$ by uniformity.

\textit{Comparability:} Suppose $[ \wf{a},\wf{b} ] \not \prece [ \wf{c},\wf{d} ]$, and so $\wf{da}\before \wf{cb}$.
Also, \autoref{prop:lex} tells us that $\{\wf{c},\wf{d}\}^n$ is ordered lexicographically, implying that $\wf{cbd}\before\wf{dbc}$. 
Hence, $\wf{dad}\before \wf{cbd}\before \wf{dbc}$, which is to say $\wf{ad}\before \wf{bc}$.

\textit{Ultrametric property:} We first show that $\max([ \wf{a},\wf{b} ],[ \wf{b},\wf{c} ])\prece [ \wf{a},\wf{c} ]$.
We have $\wf{ab}\before\wf{ac}\before\wf{ca}$, and so $[ \wf{a},\wf{b} ]\prece[ \wf{a},\wf{c} ]$. Similarly,
$\wf{ac}\before \wf{bc}\before \wf{cb}$, and so $[ \wf{b},\wf{c} ]\prece [ \wf{a},\wf{c} ]$.

We next show that $[ \wf{a},\wf{c} ]\prece\max([ \wf{a},\wf{b} ],[ \wf{b},\wf{c} ])$.
Suppose $[ \wf{a},\wf{c} ]\not\prece [ \wf{a},\wf{b} ]$. Then $\wf{ba}\before \wf{ac}$, and so
$\wf{bca}\before\wf{acc}\before \wf{caa}$, and hence $[ \wf{a},\wf{c} ]\prece [ \wf{b},\wf{c} ]$.
\end{proof}

For a relation $\prece$ on the nontrivial subintervals of $[k]$ and a weakly decreasing Schr\"oder tree
$T$, we abuse notation and write $\mathord{\prece}=\mathord{\prece}_T$ provided 
$[\wf{a},\wf{b}]\prece [\wf{c},\wf{d}]$ holds if and only if $[\wf{a},\wf{b}]_T\prece_T [\wf{c},\wf{d}]_T$ holds.

\begin{lemma}\label{lem:reconstruction}
For every tree-like ordering $\prece$ on $\binom{[k]}{2}$ there exists a 
weakly decreasing Schr\"oder tree $T$ such that $\mathord{\prece}=\mathord{\prece}_T$.
\end{lemma}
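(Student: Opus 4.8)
The plan is to reconstruct the tree $T$ from the tree-like relation $\prece$ by a recursive partitioning of the interval $[k]$, guided by the "levels" of the preorder $\prece$. The key insight is that the ultrametric property forces $\prece$ to behave like the node-depth relation in a Schröder tree: the value $[\wf{a},\wf{b}]$ should correspond to the bottommost node $[\wf{a},\wf{b}]_T$ containing both leaves, and larger intervals (closer to the root) must be $\prece$-larger.

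First I would identify the root. Among all nontrivial subintervals, the full interval $[\wf{1},\wf{k}]$ should be $\prece$-maximal; by the ultrametric property applied inductively, $[\wf{1},\wf{k}]\approx\max_{\wf{a}<\wf{b}}[\wf{a},\wf{b}]$, so every interval satisfies $[\wf{a},\wf{b}]\prece[\wf{1},\wf{k}]$. I then define an equivalence: declare positions $\wf{i}$ and $\wf{i+1}$ to lie in the same child-block if $[\wf{i},\wf{i+1}]\not\approx[\wf{1},\wf{k}]$, i.e. if the bottommost common node of the two adjacent leaves is strictly below the root. The core claim I must verify is that this relation does partition $[k]$ into contiguous blocks $I_1,\dots,I_m$ (with $m\ge 2$), each of which is either a singleton or a nontrivial subinterval, and that these blocks are exactly the subtrees hanging off the root. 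Using the ultrametric property, I would show that for any $\wf{a}<\wf{b}$ lying in different blocks, $[\wf{a},\wf{b}]\approx[\wf{1},\wf{k}]$, whereas for $\wf{a}<\wf{b}$ in the same block, $[\wf{a},\wf{b}]\prec[\wf{1},\wf{k}]$ strictly. This is what lets me place the root above these $m$ children and assign it the top level of $\prece_T$.

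Having split $[k]$ into contiguous blocks, I would recurse: each nontrivial block $I_j$ carries the restriction of $\prece$ to the subintervals contained in $I_j$, which is again tree-like (transitivity, comparability, and the ultrametric property are all inherited by restriction to a subinterval). By induction on $k$, each such $I_j$ yields a weakly decreasing Schröder subtree $T_j$; attaching these (and the singleton blocks as leaves) under a common root produces a candidate tree $T$. The ordering $\prece_T$ is then defined by stipulating that the root is the $\prece_T$-maximum and that within each subtree the levels are those inherited from $T_j$; I must check this is a valid total preorder satisfying (ST1)–(ST2), which follows because paths from the root strictly decrease (the root is strictly above every node in each $T_j$) and the inductive hypothesis handles strict decrease inside each subtree. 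Finally I would verify $\mathord{\prece}=\mathord{\prece}_T$ by the same case split: cross-block pairs map to the root and are mutually $\approx$, while same-block pairs are governed by the recursively constructed $T_j$.

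The main obstacle **will be** checking that the adjacent-pair relation "$[\wf{i},\wf{i+1}]\not\approx[\wf{1},\wf{k}]$ defines a genuine partition into contiguous blocks," and more precisely that the $\approx$-classes interact with interval nesting in a consistent way. The ultrametric property is stated only for a triple $\wf{a}<\wf{b}<\wf{c}$ as $[\wf{a},\wf{c}]\approx\max([\wf{a},\wf{b}],[\wf{b},\wf{c}])$, so I must amplify it: for any nested chain I need to know that the level of a large interval is the maximum of the levels of its adjacent unit subintervals, i.e. $[\wf{a},\wf{b}]\approx\max_{a\le i<b}[\wf{i},\wf{i+1}]$. I expect to prove this by induction on $\wf{b}-\wf{a}$ using the ultrametric identity, together with the fact that $\prece$ is a total preorder (from transitivity and comparability) so that $\approx$ is a genuine equivalence and $\max$ is well-defined up to $\approx$. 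Once this amplified ultrametric statement is in hand, the contiguity of blocks and the correctness of the recursion both fall out cleanly, so the real content is concentrated in that one lemma-within-the-proof.
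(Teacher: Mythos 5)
Your construction is correct, but it takes a genuinely different route from the paper's. You build the tree top-down: identify the root as the full interval, partition $[k]$ into child blocks according to which adjacent pairs $[\wf{i},\wf{i+1}]$ are $\approx$-equivalent to $[\wf{1},\wf{k}]$, and recurse. The engine of your argument is the amplified ultrametric identity $[\wf{a},\wf{b}]\approx\max_{\wf{a}\le \wf{i}<\wf{b}}[\wf{i},\wf{i+1}]$, which does follow by induction on $\wf{b}-\wf{a}$ once one notes that transitivity plus comparability make $\prece$ a total preorder, so $\approx$ is an equivalence and the max is well defined; it also gives you for free that the root has at least two children (some adjacent pair must achieve the max) and that paths from the root strictly decrease (every adjacent pair inside a block is strictly below the block). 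The paper instead works bottom-up and globally: it assigns to each pair the \emph{closure} $\overline{[\wf{a},\wf{b}]}$, the widest interval $\approx$-equivalent to $[\wf{a},\wf{b}]$ with the same generating pair, proves this operation is idempotent, and shows directly that the resulting family of intervals is laminar, which yields the tree without any recursion. Your version localizes the work into one clean sub-lemma at the cost of an induction on $k$; the paper's version avoids recursion but needs the two closure claims and a somewhat fiddly case analysis for laminarity. The one loose end in your write-up: as stated, your recursion defines $\prece_T$ only on the root and \emph{within} each subtree $T_j$, but (ST1) requires a single total preorder on all internal nodes, including comparisons between nodes of distinct subtrees $T_j$ and $T_{j'}$ (and these comparisons are genuinely needed to verify $\mathord{\prece}=\mathord{\prece}_T$ for pairs of intervals lying in different blocks). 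The fix is immediate --- every internal node you construct is a nontrivial subinterval of $[k]$, so simply declare $\prece_T$ to be the restriction of $\prece$ to those intervals, which is exactly what the paper does --- but you should say so explicitly.
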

In a weakly decreasing Schr\"oder tree $T$ we may have
$[\wf{a},\wf{b}]_T\approx_T [\wf{c},\wf{d}]_T$ for two reasons: either
because $[\wf{a},\wf{b}]_T=[\wf{c},\wf{d}]_T$, or neither of $[\wf{a},\wf{b}]_T$
and $[\wf{c},\wf{d}]_T$ is a descendant of one another, and they happen to be equal
in the $\prece_T$ preorder. Therefore, to build a tree out of a tree-like ordering,
we need to distinguish these two situations. We achieve this by identifying the node
with the widest interval that generates it.
\begin{proof}[Proof of \autoref{lem:reconstruction}]
For a nontrivial subinterval $[\wf{a},\wf{b}]$ 
of $[k]$, let $\wf{a'}$ be the least element of $[k]$ such that $[\wf{a'},\wf{b}]\approx [\wf{a},\wf{b}]$.
Likewise, let $\wf{b'}$ be the largest element of $[k]$ such that $[\wf{a},\wf{b'}]\approx [\wf{a},\wf{b}]$.
Define $\overline{[\wf{a},\wf{b}]}\eqdef [\wf{a'},\wf{b'}]$.

Let $\mathcal{N}\eqdef \{ \overline{[\wf{a},\wf{b}]} : \wf{a}\before\wf{b} \}$. We shall take $\mathcal{N}$ 
be the set of nodes of our tree. To show that we indeed obtain a tree, we must prove that
every two intervals from $\mathcal{N}$ are either disjoint or one of them contains the other. 
To do this we first show two basic properties of the map $[\wf{a},\wf{b}]\mapsto \overline{[\wf{a},\wf{b}]}$.\smallskip

\textbf{Claim 1:} \textit{$\overline{[\wf{a},\wf{b}]}\approx [\wf{a},\wf{b}]$ for every $\{\wf{a},\wf{b}\}\in\binom{[k]}{2}$.} Indeed, suppose $\overline{[\wf{a},\wf{b}]}=[\wf{a'},\wf{b'}]$.
If either $\wf{a}=\wf{a'}$ or $\wf{b}=\wf{b'}$, then the claim follows. Say $\wf{a'}<\wf{a}<\wf{b}<\wf{b'}$.
From the ultrametric property for the triple $\wf{a'}<\wf{a}<\wf{b'}$ we deduce that either
$[\wf{a'},\wf{b'}]\approx [\wf{a},\wf{b'}]$ or $[\wf{a'},\wf{b'}]\approx [\wf{a'},\wf{a}]$. In the former case
$[\wf{a},\wf{b}]\approx [\wf{a},\wf{b'}]\approx [\wf{a'},\wf{b'}]=\overline{[\wf{a},\wf{b}]}$, proving the claim.
In the latter case, two applications of the ultrametric property yield 
$[\wf{a}',\wf{b'}]\approx [\wf{a'},\wf{a}]\prece [\wf{a'},\wf{b}]\prece [\wf{a'},\wf{b'}]$, 
and so $[\wf{a'},\wf{b'}]\approx [\wf{a'},\wf{b}]\approx [\wf{a},\wf{b}]$ as well.\smallskip

\textbf{Claim 2:} \textit{$\overline{\overline{[\wf{a},\wf{b}]}}= \overline{[\wf{a},\wf{b}]}$ for every $\{\wf{a},\wf{b}\}\in\binom{[k]}{2}$.}
Indeed, suppose $\overline{[\wf{a},\wf{b}]}=[\wf{a'},\wf{b'}]$ and $\overline{\overline{[\wf{a},\wf{b}]}}=[\wf{a''},\wf{b''}]$.
Then by two applications of Claim~1 it follows that $[\wf{a''},\wf{b}]\prece [\wf{a''},\wf{b''}]\approx [\wf{a'},\wf{b'}]\approx [\wf{a},\wf{b}]$,
which, by the minimality of $\wf{a'}$, implies that $\wf{a''}=\wf{a'}$. Similarly, $\wf{b''}=\wf{b'}$.\smallskip

We are now ready to prove that every pair of intervals in $\mathcal{N}$ is either disjoint or comparable.
Let $[\wf{a},\wf{b}]$ and $[\wf{c},\wf{d}]$ be any two intervals from $\mathcal{N}$,
and suppose that they are not disjoint. Say $\wf{c}\leq \wf{b}$ (the case $\wf{a}\leq \wf{d}$ is analogous, and can be
reduced to this case by swapping the roles of the two intervals). If we also have $\wf{c}\leq \wf{a}$, then the interval
$[\wf{c},\wf{d}]$ contains $[\wf{a},\wf{b}]$. So, assume that $\wf{a}<\wf{c}$. By Claim~2 we may
assume that $\wf{a}$ is the minimal $\wf{a'}$ such that $[\wf{a'},\wf{b}]\approx [\wf{a},\wf{b}]$,
and similarly for $\wf{b},\wf{c},\wf{d}$. From the minimality of $\wf{c}$ we infer that $[\wf{a},\wf{d}]\not\approx [\wf{c},\wf{d}]$.
By the ultrametric property applied to the triple $\wf{a}<\wf{c}<\wf{d}$, it follows that $[\wf{a},\wf{d}]\approx [\wf{a},\wf{c}]$.
By another application of the ultrametric property, this time to $\wf{a}<\wf{c}<\wf{b}$, we infer that
$[\wf{a},\wf{c}]\prece [\wf{a},\wf{b}]$, and so
$[\wf{a},\wf{d}]\prece [\wf{a},\wf{b}]$. The ultrametric property of $\wf{a}<\wf{b}<\wf{d}$ then implies
that $[\wf{a},\wf{d}]\approx [\wf{a},\wf{b}]$, and so $\wf{b}\geq \wf{d}$ by the maximality of $\wf{b}$.
Hence, $[\wf{c},\wf{d}]$ is contained in $[\wf{a},\wf{b}]$.

It follows that intervals in $\mathcal{N}$ naturally form a tree under the containment relation.
The tree is plane, with intervals ordered in the natural way. We add leaves to the tree by
declaring that leaf $\wf{a}$ is a descendant of all intervals that contain~$\wf{a}$.
For tree nodes $[\wf{a},\wf{b}],[\wf{c},\wf{d}]\in\mathcal{N}$, we order them $[\wf{a},\wf{b}]\prece_T [\wf{c},\wf{d}]$
if and only if $[\wf{a},\wf{b}]\prece [\wf{c},\wf{d}]$.
The ultrametric property then ensures that every path from the root is decreasing.
Denote the resulting weakly decreasing Schr\"oder tree by~$T$.

Recall that $[\wf{a},\wf{b}]_T$ is the bottommost node of $T$ containing $\wf{a}$ and~$\wf{b}$.
Since $\wf{a},\wf{b}\in \overline{[\wf{a},\wf{b}]}$, it follows that
$[\wf{a},\wf{b}]_T\subseteq \overline{[\wf{a},\wf{b}]}$. On the other hand,
$[\wf{a},\wf{b}]\subseteq [\wf{a},\wf{b}]_T$, which implies that
$\overline{[\wf{a},\wf{b}]}\subseteq \overline{[\wf{a},\wf{b}]_T}=[\wf{a},\wf{b}]_T$.
So, $[\wf{a},\wf{b}]_T = \overline{[\wf{a},\wf{b}]}$ for every $\{\wf{a},\wf{b}\}\in\binom{[k]}{2}$
from which $\mathord{\prece}=\mathord{\prece}_T$ follows.
\end{proof}

The last ingredient in the proof of \autoref{thm:many} is the next result. 
\begin{lemma}\label{lem:ident}
If $T$ is a weakly decreasing Schr\"oder tree with $k$ leaves, and $\before$ is a uniform linear ordering on $[k]^n$ such that $\mathord{\prece} = \mathord{\prece}_T$,
then $\before$ is equal to $\before_T$ on every $(n-1)$\nobreakdash-subcube of~$[k]^n$.
\end{lemma}
Note that, for the reason that will become clear from the proof, we do not assert that $\mathord{\before} = \mathord{\before}_T$. 
\autoref{thm:many} nonetheless follows as we may restrict to a subcube of one dimension smaller.
\begin{proof}
It suffices to show that $w\before w'$ whenever $w\before_T w'$ and $w,w'$ differ in $t\leq n-1$ positions.
The proof is by induction on $t$. The case $t=1$ holds because our assumption that the ordering on $[k]$ is the same for 
$\before$ and~$\before_T$. The case $t=2$ holds because $\mathord{\prece} = \mathord{\prece}_T$. So, assume that $t\geq 3$.

For ease of notation we identify the $t$\nobreakdash-subcube of $[k]^n$ containing both $w$ and $w'$ with $[k]^t$.
This way, $w$ and $w'$ differ in every position. Let $i$ be the smallest natural number 
such that $[w_j,w_j']_T \prece_T [w_i,w_i']_T$ for all $j\in [n]$. Note that $w_i<w_i'$ because $w\before_T w'$.\smallskip

The symbol $w_i$ breaks $w$ into three parts, the prefix, the symbol $w_i$ itself, and the suffix. The prefix and
the suffix cannot be both empty. Suppose first that the prefix is non-empty; we then write $w$ and $w'$ as
\begin{align*}
w&=\prefw\phantom{'} w_{i-1}w_i\suffw,\\
w'&=\prefw' w_{i-1}'w_i'\suffw'
\end{align*}
for some words $\prefw,\prefw'\in [k]^{i-2}$ and $\suffw,\suffw'\in [k]^{n-i}$.
If $w_{i-1}\before w_{i-1}'$, then $w\before \prefw w_{i-1}' \suffw$ and
since $\prefw w_{i-1}' \suffw\before w'$ by the induction hypothesis,
the inequality $w\before w'$ follows. So, we may assume that $w_{i-1}'\before w_{i-1}$.

Because $t\leq n-1$ and $\before $ is uniform,
the inequality $w\before w'$ will follow once we show that 
$\overline{w}\before \overline{w}'$, where
\begin{align*}
\overline{w}&\eqdef\prefw\phantom{'} w_{i-1}w_iw_i\suffw,\\
\overline{w}'&\eqdef\prefw' w_{i-1}'w_i'w_i'\suffw'.
\end{align*}

The definition of $i$ implies that $[w_i,w_i']_T\not\prece_T [w'_{i-1},w_{i-1}]_T$. Hence, 
$[w_i,w_i']\not\prece [w'_{i-1},w_{i-1}]$, which is to say $w_{i-1}w_i\before w'_{i-1}w_i'$.
Therefore,
$
  \overline{w}\before \prefw w'_{i-1}w_i' w_i\suffw
$.
On the other hand, $\prefw w'_{i-1}w_i' w_i \suffw' \before \overline{w}'$ follows from the uniformity of $\before$ and the induction hypothesis applied to the words
$\prefw w_i\suffw$ and $\prefw' w_i' \suffw'$. Together these imply $\overline{w}\before \overline{w}'$. \smallskip

If the prefix of $w$ (before $w_i$) is empty, we write
\begin{align*}
w&=w_iw_{i+1}\suffw,\\
w'&=w_i'w_{i+1}'\suffw',
\end{align*}
and define
\begin{align*}
\overline{w}&=w_iw_iw_{i+1}\suffw,\\
\overline{w}'&=w_i'w_i'w_{i+1}'\suffw'.
\end{align*}
Because $[w'_{i+1},w_{i+1}]_T \prece_T [w_i,w_i']_T$, we have $[w'_{i+1},w_{i+1}] \prece [w_i,w_i']$,
and so $w_iw_{i+1}\before w_i'w_{i+1}'$. Since the induction hypothesis tells us that 
$w_i\suffw\before w_i'\suffw'$, we have $\overline{w}\before \overline{w}'$,
and so $w\before w'$ in this case as well.
\end{proof}

\section{Extension of Ramsey's theorem}\label{sec:ramsey}
In this section, we prove \autoref{thm:ramsey}. 

Let $n=n(2d,2,2,2)$ be as in the Graham--Rothschild theorem,
and set $m=3^n$. Let $\chi\colon \binom{[m]}{2}\to [r]$ be an
$r$\nobreakdash-coloring of the edges of~$K_m$. Define the projection map $\pi\colon [2]^n\to [m]$
by $\pi(w)\eqdef\sum_{i=1}^n w_i3^{n-i}$. The coloring $\chi$ of $\binom{[m]}{2}$ then induces a
coloring $\chi'$ of $\binom{[2]^n}{2}$ via $\chi'(w,w')\eqdef \chi\bigl(\pi(w),\pi(w')\bigr)$.
We can then define a $2$\nobreakdash-coloring of $2$\nobreakdash-subcubes of~$[2]^n$ as follows.
Let~$C$ be any $2$\nobreakdash-subcube. We identify it with~$[2]^2$ with the aid of the canonical bijection.
Then $\chi''(C)$ is equal to the $\chi'$\nobreakdash-color of the edge~$\{\wf{01},\wf{10}\}$.

By the Graham--Rothschild theorem, there is a $2d$\nobreakdash-subcube $C$ on which $\chi''$ is monochromatic.
Call pair of words $w,w'$ \emph{incomparable} if there exist both $i\in [n]$ such that $(w_i,w_i')=(\wf{0},\wf{1})$
and $j\in [n]$ such that $(w_j,w_j')=(\wf{1},\wf{0})$. Since $\chi''$ is monochromatic,
every two incomparable words in~$C$ are of the same color.

Identify $C$ with $[2]^{2d}$, and consider the set 
\[
  S\eqdef \{w\in [2]^{2d} : w_{2i-1}\neq w_{2i}\text{ for all }i=1,2,\dotsc,d\}.
\]
Though $S$ is not a \D-subcube, its image under the map $\pi$ is an affine \D-cube.
Since every two words in $S$ are incomparable, it follows that $\pi(S)$ 
is monochromatic.

\section{Open problems}\label{sec:open}
\begin{itemize}
\item Conlon and Kam\v{c}ev \cite{conlon_kamcev} showed that for every $r$\nobreakdash-coloring of $[3]^n$ there are monochromatic
lines whose wildcard set is a union of at most $r$~intervals (see also \cite{leader_raty,kamcev_spiegel} for a strengthening
for even~$r$). We do not know if one can find a combinatorial line whose wildcard set is an arithmetic progression.

\item In this paper we made no effort to obtain good quantitative bounds. The right dependence of $m$ on $d$
in \autoref{thm:es} is probably doubly exponential.
\end{itemize}

\bibliographystyle{plain}
\bibliography{hjordering}
\end{document}